\theoremstyle{plain}
\newtheorem{theorem}{Theorem}[section]
\newtheorem{proposition}[theorem]{Proposition}
\newtheorem{lemma}[theorem]{Lemma}
\theoremstyle{definition}
\newtheorem{definition}[theorem]{Definition}
\newtheorem{conjecture}[theorem]{Conjecture}
\newcommand{\Spvek}[2][r]{%
  \gdef\@VORNE{1}
  \left(\hskip-\arraycolsep%
    \begin{array}{#1}\vekSp@lten{#2}\end{array}%
  \hskip-\arraycolsep\right)}
\def\vekSp@lten#1{\xvekSp@lten#1;vekL@stLine;}
\def\vekL@stLine{vekL@stLine}
\def\xvekSp@lten#1;{\def\temp{#1}%
  \ifx\temp\vekL@stLine
  \else
    \ifnum\@VORNE=1\gdef\@VORNE{0}
    \else\@arraycr\fi%
    #1%
    \expandafter\xvekSp@lten
  \fi}
\begin{document}

\title{Chebyshev polynomials on generalized Julia sets
}


\author{G{\"o}kalp Alpan}


\maketitle

\begin{abstract}
Let $(f_n)_{n=1}^\infty$ be a sequence of nonlinear polynomials satisfying some mild conditions. Furthermore, let $F_m(z)=(f_m\circ f_{m-1}\ldots \circ f_1)(z)$ and $\rho_m$ be the leading coefficient for $F_m$. It is shown that on the Julia set $J_{(f_n)}$, the Chebyshev polynomial of the degree $\deg{F_m}$ is of the form $F_m(z)/\rho_m-\tau_m$ for all $m\in\mathbb{N}$ where $\tau_m\in\mathbb{C}$. This generalizes the result obtained for autonomous Julia sets in \cite{kamo}.

\end{abstract}

\section{Introduction}
Let $(f_n)_{n=1}^\infty$ be a sequence of rational functions in $\overline{\mathbb{C}}=\mathbb{C}\cup \{\infty\}$. Let us define the associated compositions by $F_m(z):=f_m\circ\ldots f_1(z)$ for each $m\in\mathbb{N}$. Then the set of points in $\overline{\mathbb{C}}$ for which $(F_n)_{n=1}^\infty$ is normal in the sense of Montel is called the \emph{Fatou set} for $(f_n)_{n=1}^\infty$. The complement of the Fatou set is called the \emph{Julia set} for $(f_n)_{n=1}^\infty$ and is denoted by $J_{(f_n)}$. The metric considered here is the chordal metric. Julia sets corresponding to a sequence of rational functions, to our knowledge, were considered first in \cite{Fornaess}. There are several papers appeared in the literature (see e.g. \cite{Bruck1,Buger,comerford,Rugh}) which show the possibility for adapting the results on autonomous Julia sets to this more general setting with some minor changes. By an autonomous Julia set, we mean the set $J_{(f_n)}$ with $f_n(z)=f(z)$ for all $n\in\mathbb{N}$ where $f$ is a rational function. 

The Julia set $J_{(f_n)}$ is never empty provided that $\deg{f_n}\geq 2$ for all $n$. Moreover, $F_k^{-1}(F_k(J_{(f_n)}))=J_{(f_n)}$ for all positive $k$. If, in addition, we assume that $f_n=f$ for all $n$ then $f(J(f))= f^{-1}(J(f))=J{(f)}$ where $J(f):= J_{(f_n)}$. But without the last assumption, we do not have $g(J_{(f_n)})=J_{(f_n)}$ or $g^{-1}(J_{(f_n)})=J_{(f_n)}$ for a rational function $g$ with $\deg{g}\geq 2$, in general. That is the main reason why further techniques are needed in this framework.

If $f$ is a nonlinear complex polynomial then  $J(f)=\partial\{z\in\mathbb{C}:\, f^{(n)} (z)\rightarrow\infty\}$ and $J(f)$ is an infinite compact subset of $\mathbb{C}$ where $f^{(n)}$ is the $n$-th iteration of $f$.  The next result is due to Kamo-Borodin \cite{kamo}:

\begin{theorem}\label{kammo} Let $f(z)=z^m+a_{m-1}z^{m-1}+\ldots + a_0$ be a nonlinear complex polynomial and $T_k(z)$ be a Chebyshev polynomial on $J(f)$. Then $(T_k\circ f^{(n)})(z)$ is also a Chebyshev polynomial on $J(f)$ for each $n\in\mathbb{N}$. Moreover, there exists a complex number $\tau$ such that $f^{(n)}(z)-\tau$ is a Chebyshev polynomial on $J(f)$ for all $n\in\mathbb{N}$.
\end{theorem}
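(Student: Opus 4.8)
The plan is to isolate one multiplicative step and then iterate it. Write $K:=J(f)$ and $m:=\deg f\ge 2$, and for a positive integer $d$ let $t_d(K)$ denote the Chebyshev number of degree $d$, i.e.\ the minimum of $\|P\|_K:=\sup_{z\in K}|P(z)|$ over all monic polynomials $P$ of degree $d$; a monic $P$ of degree $d$ with $\|P\|_K=t_d(K)$ is a Chebyshev polynomial of degree $d$ on $K$. The key claim is that \emph{if $Q$ is a Chebyshev polynomial of degree $d$ on $K$, then $Q\circ f$ is a Chebyshev polynomial of degree $dm$ on $K$}. Granting this, Theorem~\ref{kammo} follows by induction on $n$: since $f^{(n)}=f^{(n-1)}\circ f$ we have $T_k\circ f^{(n)}=(T_k\circ f^{(n-1)})\circ f$, so if $T_k\circ f^{(n-1)}$ is a Chebyshev polynomial of degree $km^{n-1}$, the claim applied with $Q=T_k\circ f^{(n-1)}$ makes $T_k\circ f^{(n)}$ a Chebyshev polynomial of degree $km^{n}$. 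For the ``moreover'' part take $k=1$ and $T_1(z)=z-\tau$ with $\tau\in\mathbb{C}$ a minimizer of $\sup_{z\in K}|z-\tau|$ (such a $\tau$ exists, and $T_1$ is then a Chebyshev polynomial of degree one); the claim gives that $f^{(n)}(z)-\tau=T_1(f^{(n)}(z))$ is a Chebyshev polynomial of degree $m^n$ for every $n$, with the same $\tau$.

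The easy half of the claim uses only the forward invariance $f(K)=K$: since $Q\circ f$ is monic of degree $dm$,
\[
\|Q\circ f\|_K=\sup_{z\in K}|Q(f(z))|=\sup_{w\in f(K)}|Q(w)|=\|Q\|_K=t_d(K),
\]
so in particular $t_{dm}(K)\le t_d(K)$.

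The substance of the argument is the reverse bound $t_{dm}(K)\ge t_d(K)$, which will simultaneously force $Q\circ f$ to be extremal. Given an arbitrary monic polynomial $P$ of degree $dm$, the idea is to push it down through $f$ by averaging over fibres: for $w\in\mathbb{C}$, let $v_1(w),\dots,v_m(w)$ be the roots, with multiplicity, of the monic polynomial $f(X)-w\in\mathbb{C}[X]$, and set
\[
R(w):=\frac1m\sum_{i=1}^{m}P(v_i(w)).
\]
Since $R(w)$ is a symmetric function of $v_1(w),\dots,v_m(w)$, and the elementary symmetric functions of those roots are, up to sign, the coefficients of $f(X)-w$---all independent of $w$ except the constant one, which is affine in $w$---the function $R$ is a polynomial in $w$; and inspecting the fibre over a large $w$, where the $v_i(w)$ are asymptotic to $w^{1/m}$ times the $m$-th roots of unity, one finds $\sum_i P(v_i(w))=mw^{d}+o(w^{d})$, so $R$ is monic of degree exactly $d$. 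Finally the backward invariance $f^{-1}(K)=K$ guarantees $v_i(w)\in K$ for every $i$ whenever $w\in K$, whence $|R(w)|\le\frac1m\sum_i|P(v_i(w))|\le\|P\|_K$ on $K$. Therefore $t_d(K)\le\|R\|_K\le\|P\|_K$; taking the infimum over $P$ gives $t_{dm}(K)\ge t_d(K)$, and combined with the easy half, $\|Q\circ f\|_K=t_d(K)=t_{dm}(K)$, so $Q\circ f$ is a Chebyshev polynomial of degree $dm$, proving the claim.

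The one genuinely delicate point I expect is the verification that the fibre average $R$ is a monic polynomial of degree $d$---the polynomial dependence on $w$ via symmetric functions, and the leading-coefficient computation. Once that is in hand, the two invariance properties $f(K)=K$ and $f^{-1}(K)=K$ of the Julia set do the rest, and no potential-theoretic input is required.
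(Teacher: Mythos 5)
Your proposal is correct, and it takes a genuinely different route from the paper, which in fact does not reprove Theorem \ref{kammo} at all (it cites Kamo--Borodin) but instead proves the generalization Theorem \ref{tete}. Your argument hinges on the complete invariance $f(J(f))=f^{-1}(J(f))=J(f)$: forward invariance gives $\|Q\circ f\|_{J(f)}=\|Q\|_{J(f)}$, while backward invariance makes the fibre average $R(w)=\frac1m\sum_{f(v)=w}P(v)$ --- which your symmetric-function and asymptotic computation correctly shows to be a monic polynomial of degree $d$ when $P$ is monic of degree $dm$ --- satisfy $\|R\|_{J(f)}\le\|P\|_{J(f)}$; this pushdown yields equality of the Chebyshev numbers in degrees $d$ and $dm$ and hence extremality of $Q\circ f$, and the ``moreover'' part follows with $\tau$ the Chebyshev center of $J(f)$. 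The paper's machinery for this circle of results is instead the Chebyshev-center route: Lemma \ref{lemlem}, resting on the Ostrovskii--Pakovitch--Zaidenberg theorem (Theorem \ref{uyuy}), identifies the Chebyshev polynomial of degree $\deg f$ on preimages $(g\circ f)^{-1}(K)$, and Theorem \ref{tete} passes to the Julia set by approximating $\mathcal{K}_{(f_n)}$ by the nested preimages $F_l^{-1}(\overline{D(0,R)})$ via Proposition \ref{prereq}. The trade-off is instructive: your averaging argument is more elementary (no appeal to Theorem \ref{uyuy}) and, in the autonomous case, gives more --- all the polynomials $T_k\circ f^{(n)}$ in every degree $km^n$, with a single $\tau$ --- but it uses exactly the invariance $g(J_{(f_n)})=g^{-1}(J_{(f_n)})=J_{(f_n)}$ that the introduction points out fails for non-autonomous sequences, so it does not extend to $(f_n)\in\mathcal{R}$; the paper's preimage/Chebyshev-center method avoids invariance entirely and therefore generalizes, at the cost of controlling only the degrees $\deg F_m$ and allowing $\tau_m$ to depend a priori on $m$.
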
 

Let $K\subset\mathbb{C}$ be a compact set with $\mathrm{Card}K\geq m$ for some $m\in\mathbb{N}$. Recall that, for every $n\in\mathbb{N}$ with $n\leq m$, the unique monic polynomial $P_n$ of degree $n$ satisfying $$\|P_n\|_K=\inf_{Q_{n-1}\in\mathcal{P}_{n-1}}\|z^n-Q_{n-1}(z)\|_K,$$ is called the $n$-th \emph{Chebyshev polynomial} on $K$ where $\|\cdot \|_K$ is the sup-norm on $K$ and $\mathcal{P}_{n-1}$ is the space of all polynomials of degree less than or equal to $n-1$.  

In Section 2, we review some standard facts about the generalized Julia sets and the Chebyshev polynomials. In the last section, we present a result which can be seen as a generalization of Theorem \ref{kammo}. Polynomials considered in these sections are always nonlinear complex polynomials unless stated otherwise. For a deeper discussion of Chebyshev polynomials, we refer the reader to \cite{sch,peter,sodin}. For different aspects of the theory of Julia sets, see \cite{alpgoc,Brolin,Bruck,Milnor} among others.

\section{Preliminaries}
Autonomous polynomial Julia sets enjoy plenty of nice properties. These sets are non-polar compact sets which are regular with respect to the Dirichlet problem. Moreover, there are a couple of equivalent ways to describe these sets. For further details, see \cite{Milnor}. In order to have similar features for the generalized case, we need to put some restrictions on the given polynomials. The conditions used in the following definition are from Section 4 in \cite{Bruck}. 
\begin{definition}Let $f_n(z)=\sum_{j=0}^{d_n}a_{n,j}\cdot z^j$ where $d_n\geq 2$ and $a_{n,d_n}\neq 0$ for all $n\in\mathbb{N}$. We say that $(f_n)$ is a \emph{regular polynomial sequence} if the following properties are satisfied:
\begin{itemize}
\item There exists a real number $A_1> 0$ such that $|a_{n,d_n}|\geq A_1$, for all $n\in\mathbb{N}$.
\item There exists a real number $A_2\geq 0$ such that $|a_{n,j}|\leq A_2 |a_{n,d_n}|$ for $j=0,1,\ldots, d_n-1$ and $n\in\mathbb{N}$.
\item There exists a real number $A_3$ such that $$\log{|a_{n,d_n}|}\leq A_3\cdot d_n,$$
for all $n\in\mathbb{N}$. 
\end{itemize}
\end{definition}
If $(f_n)$ is a regular polynomial sequence then we use the notation $(f_n)\in\mathcal{R}$. Here and in the sequel, $F_l(z):=(f_l\circ\ldots\circ f_1)(z)$ and $\rho_l$ is the leading coefficient of $F_l$. Let $\mathcal{A}_{(f_n)}(\infty):=\{z\in\overline{\mathbb{C}}: (F_n(z))_{n=1}^\infty \mbox{ goes locally uniformly to } \infty \}$ and $\mathcal{K}_{(f_n)}:=\{z\in\mathbb{C}: (F_n(z))_{n=1}^\infty \mbox{ is bounded}\}$. In the next theorem, we list some facts that will be necessary for the subsequent results.

\begin{theorem}\label{kar}\cite{Bruck} Let $(f_n)\in\mathcal{R}$. Then the following hold:
\begin{enumerate}[label={(\alph*})]
\item $J_{(f_n)}$ is a compact set in $\mathbb{C}$ with positive logarithmic capacity.

\item For each $R>1$ satisfying 
\begin{equation}\label{aaaa}
A_1 R\left(1-\frac{A_2}{R-1}\right)>2,
\end{equation}
we have $\mathcal{A}_{(f_n)}(\infty)=\cup_{k=1}^{\infty}{F_k}^{-1}(\triangle_R)$ and $f_n({\overline{{\triangle}_R}})\subset \triangle_R$ where $\triangle_R= \{z\in\overline{\mathbb{C}}: |z|>R\}.$ Furthermore, $\mathcal{A}_{(f_n)}(\infty)$ is a domain in $\overline{\mathbb{C}}$ containing $\triangle_R$. 

\item  $\triangle_R\subset\overline{F_k^{-1}(\triangle_R)}\subset F_{k+1}^{-1}(\triangle_R)\subset \mathcal{A}_{(f_n)}(\infty)$ for all $k\in\mathbb{N}$ and each $R>1$ satisfying \eqref{aaaa}.

\item $\partial\mathcal{A}_{(f_n)}(\infty)=J_{(f_n)}=\partial\mathcal{K}_{(f_n)}$ and $\mathcal{K}_{(f_n)}=\overline{\mathbb{C}}\setminus \mathcal{A}_{(f_n)}(\infty)$. Thus, $\mathcal{K}_{(f_n)}$ is a compact subset of $\mathbb{C}$ and $J_{(f_n)}$ has no interior points. 
\end{enumerate}
\end{theorem}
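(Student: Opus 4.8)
The plan is to establish (b), (c) and (d) first, deduce the elementary parts of (a) along the way, and leave the positive capacity in (a) for last, as it is the only substantial point.

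The whole argument is powered by a single escape estimate. Fix $(f_n)\in\mathcal{R}$ and $R>1$ satisfying \eqref{aaaa}; note \eqref{aaaa} forces $R-1>A_2$. For $|z|\ge R$, bounding the non-leading terms of $f_n$ by a geometric series gives
\[
|f_n(z)|\ \ge\ |a_{n,d_n}|\,|z|^{d_n}\Bigl(1-\tfrac{A_2}{|z|-1}\Bigr)\ \ge\ A_1 R\Bigl(1-\tfrac{A_2}{R-1}\Bigr)|z|\ >\ 2|z|\ \ge\ 2R,
\]
where $|z|^{d_n}\ge R|z|$ uses $d_n\ge 2$. Hence $f_n(\overline{\triangle_R})\subset\triangle_{2R}\subset\triangle_R$, one of the assertions in (b). Since every tail $(f_{k+n})_{n\ge 1}$ again lies in $\mathcal{R}$ with the same constants $A_1,A_2,A_3$, iterating yields: $|F_k(z)|>R$ implies $|F_{k+j}(z)|>2^{\,j}|F_k(z)|\to\infty$, uniformly near $z$ by continuity of $F_k$; so $\triangle_R\subset\mathcal{A}_{(f_n)}(\infty)$, $F_k(\triangle_R)\subset\triangle_R$, and $F_k^{-1}(\triangle_R)\subset\mathcal{A}_{(f_n)}(\infty)$ for every $k$.

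Next I would prove (b) and (c). The estimate produces a dichotomy for $z\in\mathbb{C}$: either $\sup_k|F_k(z)|\le R$ (then $z\in\mathcal{K}_{(f_n)}$), or $|F_k(z)|>R$ for some $k$ (then $F_n(z)\to\infty$, so $z\in\mathcal{A}_{(f_n)}(\infty)$). Thus $\mathcal{A}_{(f_n)}(\infty)=\bigcup_k F_k^{-1}(\triangle_R)$, and $\mathcal{K}_{(f_n)}=\{z:\sup_k|F_k(z)|\le R\}=\overline{\mathbb{C}}\setminus\mathcal{A}_{(f_n)}(\infty)$ is closed and contained in $\{|z|\le R\}$, hence compact. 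Each $F_k^{-1}(\triangle_R)$ is open, and it is connected: $F_k\colon\overline{\mathbb{C}}\to\overline{\mathbb{C}}$ is a proper holomorphic map with $F_k^{-1}(\infty)=\{\infty\}$, so over the connected set $\triangle_R$ every connected component of $F_k^{-1}(\triangle_R)$ maps properly, hence surjectively, onto $\triangle_R$ and therefore contains the unique preimage $\infty$ of $\infty$ — so there is only one component. As every $F_k^{-1}(\triangle_R)\supset\triangle_R$, the union $\mathcal{A}_{(f_n)}(\infty)$ is a domain containing $\triangle_R$. For (c): $\overline{F_k^{-1}(\triangle_R)}\subseteq\{|F_k|\ge R\}$, and the escape estimate applied to $f_{k+1}$ at a point with $|F_k(z)|\ge R$ gives $|F_{k+1}(z)|>R$, so $z\in F_{k+1}^{-1}(\triangle_R)\subset\mathcal{A}_{(f_n)}(\infty)$; the remaining inclusions in (c) are immediate from $\triangle_R\subset F_k^{-1}(\triangle_R)$.

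To pin down the Julia set I would argue as follows. On $\mathcal{K}_{(f_n)}$ the family $(F_n)$ is bounded by $R$, hence normal on $\operatorname{int}\mathcal{K}_{(f_n)}$, while on $\mathcal{A}_{(f_n)}(\infty)$ it tends to $\infty$ locally uniformly; so $\mathcal{A}_{(f_n)}(\infty)\cup\operatorname{int}\mathcal{K}_{(f_n)}$ lies in the Fatou set and $J_{(f_n)}\subseteq\mathcal{K}_{(f_n)}\setminus\operatorname{int}\mathcal{K}_{(f_n)}=\partial\mathcal{K}_{(f_n)}=\partial\mathcal{A}_{(f_n)}(\infty)$. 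Conversely, if $z_0\in\partial\mathcal{A}_{(f_n)}(\infty)$ were a Fatou point, a connected neighbourhood $U$ of $z_0$ would carry a normal family; any locally uniform subsequential limit is $\equiv\infty$ on the nonempty open set $U\cap\mathcal{A}_{(f_n)}(\infty)$, hence $\equiv\infty$ on $U$ by the identity theorem, forcing $U\subseteq\mathcal{A}_{(f_n)}(\infty)$ and contradicting $z_0\in\partial\mathcal{A}_{(f_n)}(\infty)$. So $J_{(f_n)}=\partial\mathcal{A}_{(f_n)}(\infty)=\partial\mathcal{K}_{(f_n)}$, which is closed and inside $\{|z|\le R\}$, hence a compact subset of $\mathbb{C}$; and it has empty interior because any open $V\subseteq J_{(f_n)}\subseteq\mathcal{K}_{(f_n)}$ would be a set of normality for $(F_n)$, hence disjoint from $J_{(f_n)}$.

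Finally I would prove $J_{(f_n)}$ is non-polar, by realizing the Green function of $\mathcal{A}_{(f_n)}(\infty)$ with pole at $\infty$ as $g=\lim_m g_m$ with $g_m(z):=\tfrac{1}{\deg F_m}\log|F_m(z)|$. From $F_m=f_m\circ F_{m-1}$ one has $\deg F_m=\prod_{j\le m}d_j\ge 2^m$ and $\rho_m=a_{m,d_m}\rho_{m-1}^{d_m}$, so $\tfrac{\log|\rho_m|}{\deg F_m}=\sum_{j=1}^m\tfrac{\log|a_{j,d_j}|}{\deg F_j}$; since $|a_{j,d_j}|\ge A_1$ and $\log|a_{j,d_j}|\le A_3 d_j$ give $\bigl|\log|a_{j,d_j}|\bigr|\le C d_j$ while $d_j/\deg F_j\le 2^{-(j-1)}$, this series converges, so $|\rho_m|^{1/\deg F_m}$ has a finite limit. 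Using the bounds on the non-leading coefficients to control $F_m$ uniformly on $\triangle_{R'}$ for large $R'$, and then propagating this via $F_m(z)=(f_m\circ\dots\circ f_{k+1})(F_k(z))$ over $\mathcal{A}_{(f_n)}(\infty)=\bigcup_k F_k^{-1}(\triangle_R)$, one obtains $g_m\to g$ locally uniformly on $\mathcal{A}_{(f_n)}(\infty)$, with $g>0$ and harmonic there, $g(z)-\log|z|$ bounded near $\infty$, and $\limsup_m g_m\le 0$ on $\mathcal{K}_{(f_n)}$. Existence of such a Green function means $\mathcal{K}_{(f_n)}$ is non-polar, and since $\mathcal{A}_{(f_n)}(\infty)$ is the connected unbounded complementary component, $\operatorname{cap}J_{(f_n)}=\operatorname{cap}\partial\mathcal{A}_{(f_n)}(\infty)=\operatorname{cap}\mathcal{K}_{(f_n)}>0$. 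I expect this last step — the locally uniform convergence of $(g_m)$ on all of $\mathcal{A}_{(f_n)}(\infty)$, not merely near $\infty$, together with the identification of the limit — to be the main obstacle, since it is precisely there that one must use all three conditions defining $\mathcal{R}$ to keep both the normalizations $|\rho_m|^{1/\deg F_m}$ and the relative sizes of the lower-order coefficients under control along the entire orbit.
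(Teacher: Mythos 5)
First, a point of comparison: the paper does not prove this statement at all --- Theorem \ref{kar} is quoted from Br\"uck--B\"uger \cite{Bruck} --- so your attempt can only be judged on its own merits. On those merits, most of it is sound: your escape estimate $|f_n(z)|\ge |a_{n,d_n}||z|^{d_n}\bigl(1-\tfrac{A_2}{|z|-1}\bigr)>2|z|$ for $|z|\ge R$ is correct and does drive everything; the dichotomy giving $\mathcal{A}_{(f_n)}(\infty)=\cup_k F_k^{-1}(\triangle_R)$ and $\mathcal{K}_{(f_n)}=\{z:\sup_k|F_k(z)|\le R\}$, the connectedness of each $F_k^{-1}(\triangle_R)$ via properness and $F_k^{-1}(\infty)=\{\infty\}$, the inclusions in (c), the normality of $(F_n)$ on $\mathcal{A}_{(f_n)}(\infty)\cup\mathrm{int}\,\mathcal{K}_{(f_n)}$, and the identity-theorem/subsequence argument showing no boundary point of $\mathcal{A}_{(f_n)}(\infty)$ is a Fatou point, together give (b), (c), (d), the compactness of $J_{(f_n)}$ and its empty interior, essentially completely.

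The genuine gap is the one you yourself flag: positive logarithmic capacity in (a), which is the only non-routine assertion of the theorem, is left as a plan rather than a proof --- neither the locally uniform convergence of $g_m=\tfrac{1}{\deg F_m}\log|F_m|$ on all of $\mathcal{A}_{(f_n)}(\infty)$ nor the criterion ``such a $g$ exists $\Rightarrow$ $\mathcal{K}_{(f_n)}$ is non-polar'' is established (the latter is true but itself needs an argument, e.g.\ superharmonic extension across polar sets plus constancy of positive superharmonic functions on $\mathbb{C}$). The convergence step is in fact not hard to close with what you already have: for $z\in F_k^{-1}(\triangle_R)$ and $m>k$ the escape bounds give $\bigl|\log|F_m(z)|-\log|a_{m,d_m}|-d_m\log|F_{m-1}(z)|\bigr|\le B(A_2,R)$, so dividing by $\deg F_m$ and telescoping shows $(g_m)_{m\ge k}$ is uniformly Cauchy on $F_k^{-1}(\triangle_R)$; by (c) every compact subset of $\mathcal{A}_{(f_n)}(\infty)$ sits in some $F_k^{-1}(\triangle_R)$, and your convergent series $\sum_j \log|a_{j,d_j}|/\deg F_j$ controls the normalization. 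Alternatively, you can bypass Green functions entirely: $\mathcal{K}_{(f_n)}=\cap_k F_k^{-1}(\overline{D(0,R)})$ is a decreasing intersection of compact sets, the classical formula for capacities of polynomial preimages gives $\mathrm{Cap}\bigl(F_k^{-1}(\overline{D(0,R)})\bigr)=(R/|\rho_k|)^{1/\deg F_k}$, and continuity of capacity along decreasing compacts yields $\mathrm{Cap}(\mathcal{K}_{(f_n)})=\exp\bigl(-\lim_k \tfrac{\log|\rho_k|}{\deg F_k}\bigr)>0$, the limit being finite by exactly the series you computed; since $\mathcal{A}_{(f_n)}(\infty)$ is connected, $J_{(f_n)}=\partial\mathcal{K}_{(f_n)}$ is the outer boundary of $\mathcal{K}_{(f_n)}$ and has the same (positive) capacity.
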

The next result is an immediate consequence of Theorem \ref{kar}.
\begin{proposition}\label{prereq}
Let $(f_n)\in\mathcal{R}$. Then $$\lim_{k\rightarrow\infty}\left(\sup_{a\in\overline{\mathbb{C}}\setminus F_k^{-1}(\triangle_R)} \mathrm{dist}(a, \mathcal{K}_{(f_n)})\right)=0,$$
where $\mathrm{dist}(\cdot)$ is the distance function and $R$ be a real number satisfying \eqref{aaaa}.
\end{proposition}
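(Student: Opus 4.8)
The plan is to recast the statement as a convergence assertion about the \emph{decreasing} sequence of compact sets $C_k:=\overline{\mathbb{C}}\setminus F_k^{-1}(\triangle_R)$, and to show that such a sequence converges, in the one-sided Hausdorff sense, to its intersection, which I will identify with $\mathcal{K}_{(f_n)}$. The whole argument is a soft compactness argument once the set-theoretic bookkeeping coming from Theorem \ref{kar} is in place.

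First I would record the elementary consequences of Theorem \ref{kar}. Write $U_k:=F_k^{-1}(\triangle_R)$. By part (b) we have $f_n(\overline{\triangle_R})\subset\triangle_R$ for every $n$, and iterating this along $F_k=f_k\circ\cdots\circ f_1$ gives $F_k(\overline{\triangle_R})\subset\triangle_R$, hence $\triangle_R\subset U_k$; consequently $C_k\subset\overline{\mathbb{C}}\setminus\triangle_R=\{z\in\mathbb{C}:|z|\le R\}$, so each $C_k$ is a compact subset of $\mathbb{C}$ (it is closed in $\overline{\mathbb{C}}$, being the complement of the open set $U_k$, and it is bounded). Part (c) gives $U_k\subset\overline{U_k}\subset U_{k+1}$, so $C_{k+1}\subset C_k$; and each $C_k$ is nonempty since it contains $J_{(f_n)}=\partial\mathcal{A}_{(f_n)}(\infty)$, which is disjoint from the domain $\mathcal{A}_{(f_n)}(\infty)\supseteq U_k$. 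Finally, combining $\bigcup_k U_k=\mathcal{A}_{(f_n)}(\infty)$ from part (b) with $\mathcal{K}_{(f_n)}=\overline{\mathbb{C}}\setminus\mathcal{A}_{(f_n)}(\infty)$ from part (d) yields $\bigcap_k C_k=\mathcal{K}_{(f_n)}$.

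Then I would run the standard argument. Set $\delta_k:=\sup_{a\in C_k}\mathrm{dist}(a,\mathcal{K}_{(f_n)})$; since $C_k$ is compact and $a\mapsto\mathrm{dist}(a,\mathcal{K}_{(f_n)})$ is continuous, the supremum is attained at some $a_k\in C_k$, and $(\delta_k)$ is non-increasing because $(C_k)$ is decreasing. If $\delta_k\not\to 0$, there is $\varepsilon>0$ with $\mathrm{dist}(a_k,\mathcal{K}_{(f_n)})=\delta_k\ge\varepsilon$ for all $k$. All the $a_k$ lie in the compact set $C_1$, so along a subsequence $a_{k_j}\to a^\ast$; for every fixed $m$ and all large $j$ we have $a_{k_j}\in C_{k_j}\subset C_m$, and $C_m$ is closed, so $a^\ast\in C_m$, whence $a^\ast\in\bigcap_m C_m=\mathcal{K}_{(f_n)}$. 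But continuity of the distance function gives $\mathrm{dist}(a^\ast,\mathcal{K}_{(f_n)})=\lim_j\mathrm{dist}(a_{k_j},\mathcal{K}_{(f_n)})\ge\varepsilon>0$, contradicting $a^\ast\in\mathcal{K}_{(f_n)}$. Hence $\delta_k\to 0$, which is the claim.

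There is no serious obstacle here; the only points requiring care are (i) verifying that $\bigcap_k C_k$ is \emph{exactly} $\mathcal{K}_{(f_n)}$ and not something larger, which is precisely where parts (b) and (d) of Theorem \ref{kar} are both needed, and (ii) noting that all the sets in play are bounded subsets of $\mathbb{C}$, so it is immaterial whether $\mathrm{dist}$ refers to the chordal or the Euclidean metric.
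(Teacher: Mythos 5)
Your proof is correct and takes essentially the same route as the paper's: both identify $\bigcap_k\bigl(\overline{\mathbb{C}}\setminus F_k^{-1}(\triangle_R)\bigr)$ with $\mathcal{K}_{(f_n)}$ via parts (b) and (d) of Theorem \ref{kar}, use part (c) for monotonicity of the suprema, and reach a contradiction by extracting a convergent subsequence of near-extremal points. Your write-up merely spells out a step the paper leaves implicit, namely that the subsequential limit lies in every $C_m$ and hence in $\mathcal{K}_{(f_n)}$.
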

\begin{proof}

Since the Euclidean metric and the chordal metric are strongly equivalent on the compact subsets of $\mathbb{C}$, we consider here the Euclidean metric. By using parts $(b),(c)$ and $(d)$ of Theorem \ref{kar}, we have $\overline{\mathbb{C}}\setminus{F_{k+1}^{-1}(\triangle_R)}\subset\overline{\mathbb{C}}\setminus F_{k}^{-1}(\triangle_R)$ which implies that $$\displaystyle a_k:=\sup_{a\in\overline{\mathbb{C}}\setminus F_k^{-1}(\triangle_R)} \mathrm{dist}(a, \mathcal{K}_{(f_n)})$$ is a decreasing sequence.

Suppose that $a_k\rightarrow \epsilon$ as $k\rightarrow\infty$ for some $\epsilon>0$. Then, by compactness of the set $\overline{\mathbb{C}}\setminus F_k^{-1}(\triangle_R)$, there exists $b_k\in \overline{\mathbb{C}}\setminus F_k^{-1}(\triangle_R)$ for each $k$ such that $\mathrm{dist}(b_k, \mathcal{K}_{(f_n)})\geq \epsilon.$ But since $\cap_{k=1}^\infty \overline{\mathbb{C}}\setminus F_k^{-1}(\triangle_R)= \mathcal{K}_{(f_n)}$ by parts $(b)$ and $(d)$ of Theorem $\ref{kar}$, $(b_k)$ should have an accumulation point $b$ in $\mathcal{K}_{(f_n)}$ with $\mathrm{dist}(b, \mathcal{K}_{(f_n)})>\epsilon/2$ which is clearly impossible. This completes the proof.
\end{proof}
For a compact set $K\subset\mathbb{C}$, the smallest closed disc $B(a,r)$ containing $K$ is called the \emph{Chebyshev disk} for $K$. The center $a$ of this disk is called the \emph{Chebyshev center} of $K$. These concepts were crucial and widely used in the paper \cite{kamo}. The next result which is vital for the proof of Lemma \ref{lemlem} is from \cite{pako}:
\begin{theorem}\label{uyuy}
Let $L\subset\mathbb{C}$ be a compact set with $\mathrm{card} L\geq 2$ having the origin as its Chebyshev center. Let $L_p= p^{-1}(L)$ for some monic complex polynomial $p$ with $\deg{p}=n$. Then $p$ is the unique Chebyshev polynomial of degree $n$ on $L_p$. 
\end{theorem}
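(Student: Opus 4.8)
\emph{Plan.} I will show that $p$ attains the Chebyshev norm on $L_p$ and that it is the \emph{only} monic polynomial of degree $n$ doing so. Put $r:=\|z\|_L=\max_{w\in L}|w|$; since $0$ is the Chebyshev center of $L$, this is precisely the Chebyshev radius of $L$, and moreover $\|z-c\|_L>r$ for every $c\neq 0$ because the Chebyshev center (unique in the plane) is the strict minimizer of $c\mapsto\|z-c\|_L$. As $p$ is a surjective degree-$n$ map, $\|p\|_{L_p}=\|z\|_L=r$. A short ramification count will also be used throughout: for $w_1\neq w_2$ the zero sets of $p-w_1$ and $p-w_2$ are disjoint, and since $\deg p'=n-1$ bounds their total ramification, together they contain at least $n+1$ distinct points; in particular $\mathrm{card}\,L_p\ge n+1$, so the degree-$n$ Chebyshev polynomial of $L_p$ is well defined and unique. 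It therefore suffices to prove: if $Q$ is monic of degree $n$ with $\|Q\|_{L_p}\le r$, then $Q=p$.

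\emph{Key step: an averaging identity.} For $w\in\mathbb{C}$ let $z_1(w),\dots,z_n(w)$ be the zeros of $p(z)-w$, listed with multiplicity. Their elementary symmetric functions $e_1,\dots,e_{n-1}$ are (up to sign) the coefficients of $p$ and hence do not depend on $w$, while only $e_n$ does; by Newton's identities the power sums $\sum_j z_j(w)^i$ for $i\le n-1$ are then also independent of $w$. Applying this to the coefficients of $Q-p$ (a polynomial of degree $\le n-1$) and using $\sum_j p(z_j(w))=nw$, we get
\[
\frac1n\sum_{j=1}^n Q\bigl(z_j(w)\bigr)=w+\gamma
\]
for a constant $\gamma\in\mathbb{C}$. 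For $w\in L$ each $z_j(w)$ lies in $L_p$, so $|Q(z_j(w))|\le r$, and the identity gives $|w+\gamma|\le r$ for all $w\in L$, i.e. $\|z-(-\gamma)\|_L\le r$. Since $r$ is the minimum of $c\mapsto\|z-c\|_L$ and $0$ is its unique minimizer, this forces $\gamma=0$; hence $\frac1n\sum_j Q(z_j(w))=w$ for every $w\in L$.

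\emph{Finishing via farthest points.} Fix any farthest point $w\in L$, i.e. $|w|=r$. Then $w$ is the arithmetic mean of the $n$ numbers $Q(z_j(w))$, each of modulus at most $r$, so the equality case of the triangle inequality forces $Q(z_j(w))=w=p(z_j(w))$ for every $j$. Thus $Q-p$ vanishes on $p^{-1}(\mathcal F)$, where $\mathcal F=\{w\in L:|w|=r\}$. Because $\mathrm{card}\,L\ge 2$ we have $r>0$, and since $0$ is the Chebyshev center it lies in the convex hull of $\mathcal F$; a singleton convex hull would be $\{0\}$, contradicting $r>0$, so $\mathcal F$ contains two distinct points $w_1\neq w_2$. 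By the ramification count above, $p^{-1}(\{w_1,w_2\})$ has at least $n+1$ distinct points, all of them zeros of $Q-p$. A polynomial of degree at most $n-1$ with $n+1$ zeros is identically zero, so $Q=p$.

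\emph{Where the difficulty lies.} The crux is spotting the averaging identity: once one notices that the fibrewise mean of \emph{any} monic degree-$n$ polynomial over a level set of $p$ is an affine function of the level, the extremal problem on $L_p$ collapses onto the hypothesis about $L$ itself. Everything after that—that $\gamma$ must vanish, that equality in the triangle inequality pins $Q$ down on the farthest fibres, and that those fibres are numerous enough to force $Q=p$—is routine, the only non-trivial inputs being the standard characterization of the Chebyshev center (so that $|\mathcal F|\ge 2$) and the elementary ramification bound $\deg p'=n-1$.
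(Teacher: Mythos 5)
Your argument is correct, but note that the paper itself does not prove Theorem \ref{uyuy}: it is imported as a black box from the reference \cite{pako} (Ostrovskii--Pakovitch--Zaidenberg) and then used inside Lemma \ref{lemlem}, so there is no in-paper proof to compare against. What you have written is a complete, self-contained proof, and it is essentially the natural (and, to my knowledge, the original) route: the fiber-averaging identity $\frac1n\sum_{p(z)=w}Q(z)=w+\gamma$, which holds because $Q-p$ has degree at most $n-1$ and the power sums of the roots of $p(z)-w$ up to order $n-1$ do not depend on $w$; the minimality and uniqueness of the Chebyshev center forcing $\gamma=0$; the equality case of the triangle inequality on the farthest fibres; and the ramification bound $\deg p'=n-1$ showing that two farthest values already have at least $n+1$ distinct preimages, which kills $Q-p$. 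Each of these steps checks out, including the two ``standard facts'' you invoke (uniqueness of the Chebyshev center in the Euclidean plane, and the fact that the center lies in the convex hull of the farthest points, which is what guarantees $\mathrm{card}\,\mathcal F\ge 2$); it would be worth one line each to justify them, but they are genuinely classical. A pleasant byproduct of your bookkeeping is that it also verifies $\mathrm{card}\,L_p\ge n+1$, so existence and uniqueness of the degree-$n$ Chebyshev polynomial on $L_p$ is not an issue — a point the statement of the theorem quietly presupposes.
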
   
\section{Results}
First, we begin with a lemma which is also interesting in its own right. 
\begin{lemma}\label{lemlem}Let $f$ and $g$ be two non-constant complex polynomials and $K$ be a compact subset of $\mathbb{C}$ with
$\mathrm{card} K\geq 2$. Furthermore, let $\alpha$ be the leading coefficient of $f$. Then the following propositions hold.
\begin{enumerate}[label={(\alph*})]
\item The Chebyshev polynomial of the degree $\deg{f}$ on the set $(g\circ f)^{-1}(K)$ is of the form $f(z)/\alpha-\tau$ where $\tau\in\mathbb{C}$. 
\item If $g$ is given as a linear combination of monomials of even degree and $K=\overline{D(0,R)}$ for some $R>0$ then the $\deg{f}$-th Chebyshev polynomial on the set $(g\circ f)^{-1}(K)$ is  $f(z)/\alpha$.
\end{enumerate}
\end{lemma}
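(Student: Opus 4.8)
The plan is to reduce both parts to Theorem 1.3 (the Praktkov–Kamo–Borodin type result of \cite{pako}), applied to a suitably normalized version of the polynomial $f$. First I would write $f(z)=\alpha\, h(z)$ where $h$ is monic of degree $n:=\deg f$, so that $(g\circ f)^{-1}(K)=h^{-1}\bigl((\alpha\,\mathrm{id})^{-1}(g^{-1}(K))\bigr)=h^{-1}(L)$ where $L:=\{w:\ g(\alpha w)\in K\}$ is a compact set (it is closed since $g$ is continuous and $K$ is closed, and bounded since $g$ is a nonconstant polynomial and $K$ is bounded). To apply Theorem 1.3 to $L$, I need $\mathrm{card}\,L\ge 2$, which follows because $h$ is a homeomorphism onto its image wherever it is injective and $h^{-1}(L)=(g\circ f)^{-1}(K)$ has at least two points by hypothesis; and I need the Chebyshev center of $L$ to be the origin. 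But in general that center is some point $c\in\mathbb{C}$, not $0$. The fix is a translation: put $\widetilde L:=L-c$ and $\widetilde h(z):=h(z)-c$, still monic of degree $n$, with $\widetilde h^{-1}(\widetilde L)=h^{-1}(L)=(g\circ f)^{-1}(K)$. Now Theorem 1.3 applies to $\widetilde L$ and $\widetilde h$ and yields that $\widetilde h$ is the unique Chebyshev polynomial of degree $n$ on $(g\circ f)^{-1}(K)$. Since $\widetilde h(z)=h(z)-c=f(z)/\alpha-c$, this is exactly the asserted form with $\tau:=c$, proving part (a).

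For part (b), the point is to show that the relevant Chebyshev center is actually the origin, so that no translation is needed and $\tau=0$. Here $K=\overline{D(0,R)}$ and $g(w)=\sum_j b_j w^{2j}$ is even, so $g(-w)=g(w)$ for all $w$. Consequently the set $L=\{w:\ g(\alpha w)\in\overline{D(0,R)}\}$ satisfies $L=-L$, i.e.\ $L$ is symmetric about the origin. The Chebyshev disk (smallest enclosing disk) of a compact set is unique, hence its center is fixed by every isometry preserving the set; applying the isometry $w\mapsto -w$ shows the Chebyshev center $c$ of $L$ satisfies $c=-c$, so $c=0$. Then Theorem 1.3 applies directly to $L$ and $h(z)=f(z)/\alpha$, giving that $f(z)/\alpha$ is the Chebyshev polynomial of degree $\deg f$ on $(g\circ f)^{-1}(K)$, as claimed.

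The main obstacle I anticipate is the bookkeeping around the reduction $(g\circ f)^{-1}(K)=h^{-1}(L)$ and, more delicately, verifying the hypotheses of Theorem 1.3 cleanly: one must check that $L$ is genuinely compact (boundedness uses that $g$ is nonconstant, so $|g(\alpha w)|\to\infty$ as $|w|\to\infty$) and that $\mathrm{card}\,L\ge 2$. For the latter, note $h$ maps $(g\circ f)^{-1}(K)$ onto $L$ (since $w\in L$ iff $w=h(z)$ for some $z$ with $g(\alpha w)=g(f(z))\in K$, i.e.\ $z\in(g\circ f)^{-1}(K)$, using that $h$ is surjective onto $\mathbb C$), so $\mathrm{card}\,L\ge 1$; if $L$ were a single point $\{w_0\}$ then $(g\circ f)^{-1}(K)=h^{-1}(w_0)$ would be a finite set, but it must have at least two points — still finite is allowed — so one needs the slightly sharper observation that $\mathrm{card}\,h^{-1}(w_0)\le n$ forces nothing contradictory; instead argue that if $L=\{w_0\}$ then its Chebyshev center is $w_0$ and Theorem 1.3 still applies verbatim once we translate, OR simply invoke that a single-point preimage set under a degree-$n$ polynomial still has a well-defined Chebyshev polynomial and the formula persists. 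The cleanest route is: $\mathrm{card}(g\circ f)^{-1}(K)\ge 2$ together with $h$ having finite fibers forces $\mathrm{card}\,L\ge 2$ is \emph{not} automatic, so I would instead drop the cardinality worry for $L$ and note that Theorem 1.3's proof (uniqueness of the Chebyshev polynomial via the Chebyshev disk) goes through whenever $L_p$ has at least two points, which is our standing hypothesis on $K$'s preimage; this is the one spot requiring a careful reading of \cite{pako}.
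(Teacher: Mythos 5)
Your reduction is exactly the paper's: write $f=\alpha h$ with $h$ monic, pass to $L=\{w:\ g(\alpha w)\in K\}$ (the paper's $K_1/\alpha$ with $K_1=g^{-1}(K)$), translate by the Chebyshev center of $L$, and invoke the Ostrovskii--Pakovitch--Zaidenberg theorem (Theorem \ref{uyuy}); part (b) is handled, as in the paper, by the symmetry $L=-L$ forcing the Chebyshev center to be $0$. So the architecture is fine. The genuine gap is your treatment of the hypothesis $\mathrm{card}\,L\ge 2$. Your first argument (``$h$ is a homeomorphism onto its image wherever it is injective and $h^{-1}(L)$ has at least two points'') does not work: a set with two or more points can perfectly well have a one-point image under $h$ (take two distinct roots of $h$), and in any case the standing hypothesis is $\mathrm{card}\,K\ge 2$, not a hypothesis on the preimage. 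You then notice this yourself, but your fallback --- drop the cardinality requirement on $L$ and assert that the proof in \cite{pako} ``goes through whenever $L_p$ has at least two points'' --- is both unjustified and, as a statement, false: for $L=\{0\}$ and $p(z)=z^2(z-1)$ one has $L_p=\{0,1\}$ (two points), yet both $z^2(z-1)$ and $z(z-1)^2$ have sup-norm $0$ on $L_p$, so the degree-$3$ Chebyshev polynomial there is not unique and the conclusion of Theorem \ref{uyuy} fails. You cannot lean on an unread strengthening of the quoted theorem.

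The missing step is a one-liner, and it is how the paper handles it: by the fundamental theorem of algebra the nonconstant polynomial $w\mapsto g(\alpha w)$ is surjective on $\mathbb{C}$, so it maps $L$ \emph{onto} $K$; distinct points of $K$ have disjoint, nonempty fibers, hence $\mathrm{card}\,L\ge \mathrm{card}\,K\ge 2$. (Compactness of $L$ you argue correctly: closedness from continuity, boundedness because $|g(\alpha w)|\to\infty$.) With that inserted, Theorem \ref{uyuy} applies to $\widetilde L=L-c$ and $\widetilde h=h-c$ exactly as you propose, and both parts of the lemma follow; your symmetry argument in (b), via uniqueness of the smallest enclosing disk, is correct and in fact slightly more explicit than the paper's.
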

\begin{proof} Let $K_1:=g^{-1}(K)$. Then $(g\circ f)^{-1}(K)=f^{-1}(K_1)=(f/\alpha)^{-1}(K_1/\alpha)$ where $K_1/\alpha=\{z: z=z_1/\alpha \mbox{ for some } z_1\in K_1\}$. By the fundamental theorem of algebra, $\mathrm{card} (K_1/\alpha)=\mathrm{card} K_1 \geq\mathrm{card} K$ and $K_1$ is compact by the continuity of $g(z)$. The set $K_1/\alpha$ is also compact since the compactness of a set is preserved under a linear transformation. Let $\tau$ be the Chebyshev center for $K_1/\alpha$. Then $K_1/\alpha-\tau$ is a compact set with the Chebyshev center as the origin where $K_1/\alpha-\tau:= \{z: z=z_1-\tau \mbox{ for some } z_1\in K_1/\alpha\}$. Note that, $\mathrm{card} (K_1/\alpha-\tau)=\mathrm{card} (K_1/\alpha)$ and $(f/a)^{-1}(K_1/\alpha)=(f/\alpha-\tau)^{-1}(K_1/\alpha-\tau)$. Using Theorem \ref{uyuy}, for $p(z)=f(z)/\alpha-\tau$ and $L=K_1/\alpha-\tau$, we see that $p(z)$ is the $\deg{f}$-th Chebyshev polynomial on the set $L_p=(g\circ f)^{-1}(K)$. This proves the first part of the theorem.

Suppose further that $g(z)=\sum_{j=0}^n a_j\cdot z^{2j}$ for some $n\geq 1$ and $(a_0, \ldots, a_n)\in \mathbb{C}^{n+1}$ with $a_n\neq 0$. Let $K=\overline{D(0,R)}$ for some $R>0$. Then the Chebyshev center for $K_1/\alpha=g^{-1}(K)/\alpha=g^{-1}(D(0,R))/\alpha$ is the origin since $g(z)/\alpha=g(-z)/\alpha$ for all $z\in\mathbb{C}$. Thus, $f(z)/\alpha$ is the $\deg{f}$-th Chebyshev polynomial for $(g\circ f)^{-1}(K)$ under these extra assumptions.
\end{proof}
The next theorem shows that it is possible to obtain similar results to Theorem \ref{kammo} in a richer setting.
\begin{theorem}\label{tete}
Let $(f_n)\in\mathcal{R}$. Then the following hold:
\begin{enumerate}[label={(\alph*})]
\item  For each $m\in\mathbb{N}$, the $\deg{F_m}$-th Chebyshev polynomial on $J_{(f_n)}$ is of the form $F_m(z)/\rho_m-\tau_m$ where $\tau_m\in\mathbb{C}$.
\item If, in addition, each $f_n$ is given as a linear combination of monomials of even degree then $F_m(z)/\rho_m$ is the $\deg{F_m}$-th Chebyshev polynomial on $J_{(f_n)}$ for all $m$.
\end{enumerate}
\end{theorem}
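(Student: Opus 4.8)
The plan is to realise $J_{(f_n)}$ as a Hausdorff limit of explicit polynomial preimages and to carry Lemma~\ref{lemlem} through this limit. Fix $m\in\mathbb{N}$ and a number $R>1$ satisfying \eqref{aaaa}. For $k\ge m$ put $G_{m,k}:=f_k\circ\cdots\circ f_{m+1}$, with $G_{m,m}$ the identity; this is a non-constant polynomial with $F_k=G_{m,k}\circ F_m$, and we set
$$E_k:=\overline{\mathbb{C}}\setminus F_k^{-1}(\triangle_R)=F_k^{-1}\bigl(\overline{D(0,R)}\bigr),$$
a compact subset of $\mathbb{C}$. Applying Lemma~\ref{lemlem}(a) with $f=F_m$, $g=G_{m,k}$ and $K=\overline{D(0,R)}$ then shows, for every $k\ge m$, that the $\deg F_m$-th Chebyshev polynomial on $E_k$ is $F_m(z)/\rho_m-\tau_k$ for some $\tau_k\in\mathbb{C}$. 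The remaining task is to let $k\to\infty$.

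Two preliminaries are needed. First, by parts (b), (c) and (d) of Theorem~\ref{kar} the $E_k$ decrease with $\bigcap_k E_k=\mathcal{K}_{(f_n)}$, and by Proposition~\ref{prereq} they converge to $\mathcal{K}_{(f_n)}$ in the Hausdorff metric; hence, since $\mathcal{K}_{(f_n)}\subset E_k\subset E_m$ and $|P|$ is uniformly continuous on the compact set $E_m$, for every fixed polynomial $P$ we get $\|P\|_{E_k}\to\|P\|_{\mathcal{K}_{(f_n)}}$ as $k\to\infty$. Second, by the maximum modulus principle and $\partial\mathcal{K}_{(f_n)}=J_{(f_n)}$ (Theorem~\ref{kar}(d)), $\|P\|_{\mathcal{K}_{(f_n)}}=\|P\|_{J_{(f_n)}}$ for every polynomial $P$, so $\mathcal{K}_{(f_n)}$ and $J_{(f_n)}$ have the same Chebyshev polynomials; it therefore suffices to work with $\mathcal{K}_{(f_n)}$.

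Now the limit passage. Write $n=\deg F_m$, let $t_k$ be the Chebyshev number of $E_k$ (that is, the minimum of $\|P\|_{E_k}$ over monic $P$ of degree $n$), and let $T$ and $t_\infty$ be the corresponding Chebyshev polynomial and Chebyshev number of $\mathcal{K}_{(f_n)}$. From $\mathcal{K}_{(f_n)}\subset E_k$ we have $t_\infty\le t_k\le\|T\|_{E_k}$, and $\|T\|_{E_k}\to t_\infty$ by the first preliminary, so $t_k\to t_\infty$. Since $E_k\subset E_m$ also $t_k\le\|z^n\|_{E_m}$, so from $|F_m(z)/\rho_m-\tau_k|\le t_k$ on $\mathcal{K}_{(f_n)}$, evaluated at a single point of $\mathcal{K}_{(f_n)}$, the sequence $(\tau_k)_{k\ge m}$ is bounded; pass to a subsequence $\tau_{k_j}\to\tau^*$. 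Then $\|F_m/\rho_m-\tau_{k_j}\|_{\mathcal{K}_{(f_n)}}\le\|F_m/\rho_m-\tau_{k_j}\|_{E_{k_j}}=t_{k_j}$, and letting $j\to\infty$ — using that $\tau\mapsto\|F_m/\rho_m-\tau\|_{\mathcal{K}_{(f_n)}}$ is continuous and $t_k\to t_\infty$ — yields $\|F_m/\rho_m-\tau^*\|_{\mathcal{K}_{(f_n)}}\le t_\infty$. As $F_m/\rho_m-\tau^*$ is monic of degree $n$ the reverse inequality is automatic, so by uniqueness of the Chebyshev polynomial, $F_m(z)/\rho_m-\tau^*$ is the $\deg F_m$-th Chebyshev polynomial on $\mathcal{K}_{(f_n)}$, hence on $J_{(f_n)}$, and we put $\tau_m:=\tau^*$. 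Part (b) follows from the same argument together with one observation: if every $f_n$ is a linear combination of monomials of even degree, then each $f_n$ is an even function, so for $k\ge m+1$ the polynomial $G_{m,k}$ is even, that is, again a linear combination of monomials of even degree; Lemma~\ref{lemlem}(b) then forces $\tau_k=0$ for all $k\ge m+1$, whence $\tau^*=0$, and $F_m(z)/\rho_m$ is the $\deg F_m$-th Chebyshev polynomial on $J_{(f_n)}$.

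I expect the main obstacle to be the limit passage of the third paragraph: one must show the translation constants $\tau_k$ stay bounded and that $\|P\|_{E_k}\to\|P\|_{\mathcal{K}_{(f_n)}}$ as $E_k\to\mathcal{K}_{(f_n)}$ (both resting on Proposition~\ref{prereq}), and one must keep careful track of the passage between the Chebyshev polynomials of $\mathcal{K}_{(f_n)}$ and of its boundary $J_{(f_n)}$. The algebraic heart — that at each finite stage the extremal polynomial is a translate of $F_m/\rho_m$ — is supplied directly by Lemma~\ref{lemlem}.
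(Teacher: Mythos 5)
Your proposal is correct and follows essentially the same route as the paper: the decomposition $F_k=G_{m,k}\circ F_m$ fed into Lemma~\ref{lemlem}, the nested preimages $F_k^{-1}(\overline{D(0,R)})$ shrinking to $\mathcal{K}_{(f_n)}$ via Proposition~\ref{prereq}, a compactness argument for the translation constants, and the maximum principle to pass from $\mathcal{K}_{(f_n)}$ to $J_{(f_n)}$. The only deviation is cosmetic: you obtain the convergence of the Chebyshev numbers by the squeeze $t_\infty\le t_k\le\|T\|_{E_k}\to t_\infty$, where the paper instead runs a proof by contradiction with an explicit $\delta$ depending on the coefficients of the Chebyshev polynomial of $\mathcal{K}_{(f_n)}$.
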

\begin{proof}
Let $m\in\mathbb{N}$ be given and $R>1$ satisfy \eqref{aaaa}. For each natural number $l>m$, define $g_l:=f_l\circ\ldots\circ f_{m+1}$. Then $F_l=g_l\circ F_m$ for each such $l$. Using part $(a)$ of Lemma \ref{lemlem} for $g=g_l$, $f=F_m$ and $K=\overline{D(0,R)}$, we see that the $(d_1\cdots d_m)$-th Chebyshev polynomial on $(g_l\circ F_m)^{-1}(\overline{D(0,R)})$ is of the form $F_m(z)/\rho_m-\tau_l$ where $\tau_l\in\mathbb{C}$. Let $C_l:=||F_m(z)/\rho_m-\tau_l||_{(g_l\circ F_m)^{-1}(K)}$. Note that, by part $(c)$ of Theorem \ref{kar}, $F_s^{-1}(\overline{D(0,R)})\subset F_t^{-1}(\overline{D(0,R)})$ provided that $s<t$. This implies that $(C_j)_{j=m+1}^\infty$ is a decreasing sequence of positive numbers and hence has a limit $C$. The last follows from the observation that the norms of the Chebyshev polynomials of the same degree on a nested sequence of compact sets constitute a decreasing sequence. 

Let $P_{d_1\cdots d_m}(z)=\sum_{j=0}^{d_1\cdots d_m}a_j z^j$ be the $(d_1\cdots d_m)$-th Chebyshev polynomial on $\mathcal{K}_{(f_n)}$. Since $\mathcal{K}_{(f_n)}\subset (g_l\circ F_m)^{-1}(\overline{D(0,R)})$ for each $l$, $||P_{d_1\cdots d_m}||_{\mathcal{K}_{(f_n)}}:=C_0\leq C$. Suppose that $C_0<C$.

Let $\epsilon= \min\{(C-C_0)/2,1\}$ and $$\delta=\frac{\epsilon}{\max\{|a_1|, |a_2|,\ldots,|a_{d_1\cdots d_m}|\}(8R)^{d_1\cdots d_m} (d_1\cdots d_m)^2}.$$
By Proposition \ref{prereq}, there exists a real number $N_0>m$ such that $N>N_0$ with $N\in\mathbb{N}$ implies that $$\sup_{z\in\overline{\mathbb{C}}\setminus F_N^{-1}(\triangle_R)} \mathrm{dist}(z, \mathcal{K}_{(f_n)})<\delta.$$ Therefore, for any $z\in F_{N_0+1}^{-1}(\overline{D(0,R)})$, there exists a number $z^\prime\in\mathcal{K}_{(f_n)}$ with $|z-z^\prime|<\delta$. Hence, for each $z\in F_{N_0+1}^{-1}(\overline{D(0,R)})$,
\begin{equation*}
|P_{d_1\cdots d_m}(z)|<|P_{d_1\cdots d_m}(z^\prime)|+\frac{\epsilon}{2}<C\leq\bigg|\bigg| \frac{F_m(z)}{\rho_m}-\tau_{N_0+1}\bigg|\bigg|_{F_{N_0+1}^{-1}(\overline{D(0,R)})},
\end{equation*}
which contradicts with the fact that $F_m(z)/{\rho_m}+\tau_{N_0+1}$ is the $(d_1\cdots d_m)$-th Chebyshev polynomial on $F_{N_0+1}^{-1}(\overline{D(0,R)})$. Thus $C_0=C$. 

Since $(C_j)_{j=m+1}^\infty$ is decreasing we have $|\tau_l|<2C_m+|\tau_m|$ provided that $l>m$. Thus $(\tau_l)_{l=m+1}^\infty$ has at least one convergent subsequence $(\tau_{l_k})_{k=1}^{\infty}$ with a limit $\tau_m$. Therefore, 
\begin{equation}\label{huhu}
C\leq \lim_{k\rightarrow \infty}\bigg|\bigg|\frac{F_m(z)}{\rho_m}-\tau_m \bigg|\bigg|_{F_{l_k}^{-1}(\overline{D(0,R)})}\leq \lim_{k\rightarrow \infty} (C_{l_k}+|\tau_{l_k}-\tau_m|)=C.
\end{equation}
By uniqueness of the Chebyshev polynomials and \eqref{huhu}, $F_m(z)/\rho_m-\tau_m$ is $(d_1\cdots d_m)$-th Chebyshev polynomial on $\mathcal{K}_{(f_n)}$. By the maximum principle, for any polynomial $Q$, we have $$||Q||_{\mathcal{K}_{(f_n)}}=||Q||_{\partial\mathcal{K}_{(f_n)}}=||Q||_{J_{(f_n)}}.$$
Hence the Chebyshev polynomials on $\mathcal{K}_{(f_n)}$ and $J_{(f_n)}$ should coincide. This proves the first assertion.

Suppose further that, the assumption given in part $(b)$ is satisfied. Then by part $(b)$ of Lemma \ref{lemlem}, for $g=g_l$, $f=F_m$ and $K=\overline{D(0,R)}$, the $(d_1\cdots d_m)$-th Chebyshev polynomial on $(g_l\circ F_m)^{-1}(\overline{D(0,R)})$ is of the form $F_m(z)/\rho_m-\tau_l$ where $\tau_l=0$ for $l>m$. Any subsequence of $(\tau_l)_{l=m+1}^\infty$ converges to $0$. Thus, arguing as above, we can reach the conclusion that $F_m(z)/\rho_m$ is the $(d_1\cdots d_m)$-th Chebyshev polynomial for $J_{(f_n)}$ under this extra assumption. This completes the proof.
\end{proof}

This theorem gives the total description of $2^n$ degree Chebyshev polynomials for the most studied case, i.e., $f_n(z)=z^2+c_n$ with  $c_n\in\mathbb{C}$ for all $n$.  If $(c_n)_{n=1}^\infty$ is bounded then the logarithmic capacity of $J_{(f_n)}$ is $1$. Moreover, if $|c_n|\leq 1/4$ for all $n$, $J_{(f_n)}$ is connected. If $|c_n|<c<1/4$ for some positive $c$, then $J_{(f_n)}$ is quasicircle. See \cite{Bruck1}, for the definition of a quasicircle and proofs of the above facts.

For a non-polar infinite compact set $K\in\mathbb{C}$, let us define the sequence $(W_n(K))_{n=1}^\infty$ by $W_n(K)=||P_n||/(\mathrm{Cap}(K))^n$ for all $n\in\mathbb{N}$. There are recent studies on the asymptotic behaviour of these sets on several occasions. See e.g. \cite{andr,gonchat,totikvar}. 

In order $W_n(K)$ to be bounded, there are sufficient conditions given in terms of the smoothness of the outer boundary of $K$ in \cite{andr,totikvar}. There is also an old and open question proposed by Ch.Pommerenke which is in the inverse direction: Find (if it is possible) a continuum $K$ with $\mathrm{Cap}(K)=1$ such that $(W_n(K))_{n=1}^\infty$ is unbounded. To answer this question positively, it is very natural to consider a continuum with a nonrectifiable outer boundary. Thus, we make the following conjecture:
\begin{conjecture}\label{concon}
Let $f(z)=z^2+1/4$. Then, $(W_n(J(f))_{n=1}^\infty$ is unbounded. 
\end{conjecture}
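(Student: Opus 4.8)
\medskip
\noindent One possible line of attack for Conjecture \ref{concon} is the following. Write $J:=J(f)$. The Green's function of the unbounded component of $\overline{\mathbb{C}}\setminus J$ with pole at $\infty$ is $g(z)=\lim_{n\to\infty}2^{-n}\log^{+}|f^{(n)}(z)|$, which equals $\log|z|+o(1)$ near $\infty$, so $\mathrm{Cap}(J)=1$ and $W_{n}(J)=\|T_{n}\|_{J}$ with $T_{n}$ the $n$-th Chebyshev polynomial. By Theorem \ref{kammo} the monic polynomial $T_{k}\circ f^{(j)}$ of degree $k2^{j}$ is a Chebyshev polynomial on $J$, hence equals $T_{k2^{j}}$; since $f^{(j)}(J)=J$ this gives $W_{k2^{j}}=W_{k}$. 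Therefore $(W_{n})$ assumes precisely the values $\{W_{m}:m\text{ odd}\}$, and the conjecture is equivalent to $\sup_{m\text{ odd}}W_{m}=\infty$. The first task is thus to exhibit a divergent subsequence $(W_{m_{j}})$ along odd integers.

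The geometric input I would exploit is the parabolic fixed point $z_{0}=1/2$: here $f(z_{0})=z_{0}$, $f'(z_{0})=1$, and in the coordinate $\zeta=z-z_{0}$ the map is \emph{exactly} $\zeta\mapsto\zeta+\zeta^{2}$, so $f^{(j)}(z_{0}+\zeta)=z_{0}+\zeta+j\zeta^{2}+(j^{2}-j)\zeta^{3}+\cdots$, comparable to $z_{0}+\zeta/(1-j\zeta)$ while $|j\zeta|<1$. Feeding this into $g(f(z))=2g(z)$ along the real axis (where the orbit of $z_{0}+t$ under $f$ needs on the order of $1/t$ iterations to reach size $O(1)$) shows that $g(z_{0}+t)$ is comparable to $2^{-1/t}$ as $t\downarrow 0$; pulling this back through $f^{(k)}$ yields the same superpolynomially fast vanishing of $g$ at every point of the dense set $\bigcup_{k\ge1}(f^{(k)})^{-1}(z_{0})$. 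Equivalently, the conformal map from $\{|w|>1\}$ onto $\overline{\mathbb{C}}\setminus J$ has only a logarithmic modulus of continuity at the preimage of $z_{0}$ and an unbounded derivative there: $J$ is ``conformally extremely thin'' at $z_{0}$, and its equilibrium measure puts almost no mass near $z_{0}$. The remaining task is to convert this into a divergent lower bound for $\|T_{m}\|_{J}$.

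Two routes seem natural. \emph{(Duality.)} Using the identity $\|T_{m}\|_{J}=\sup\{\,|\int z^{m}\,d\mu|:\ \mu\text{ a complex measure on }J,\ \|\mu\|\le1,\ \int z^{k}\,d\mu=0\text{ for }0\le k<m\,\}$, one would construct, for a sparse sequence of odd $m$, a measure $\mu$ concentrated near $z_{0}$ (or spread between $z_{0}$ and a matched point of $J$) whose moments of order $<m$ vanish while $|\int z^{m}\,d\mu|\to\infty$ --- the heuristic being that where $J$ is conformally negligible the constraint $\|\mu\|\le1$ leaves enough room to make the top moment large. \emph{(Widom-type analysis.)} Alternatively, since $T_{2^{j}}=f^{(j)}$ and $T_{k2^{j}}=T_{k}\circ f^{(j)}$ are understood, one would run the Faber/Widom machinery for the Jordan domain $\overline{\mathbb{C}}\setminus J$ and show that the singularity of the conformal derivative at the preimage of $z_{0}$ forces $\limsup_{m}\|T_{m}\|_{J}=\infty$. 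In either route one concludes by invoking $W_{2^{j}m}=W_{m}$ to promote the subsequence to the unboundedness of $(W_{n})_{n=1}^{\infty}$.

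The crux --- and the reason this is recorded only as a conjecture --- is precisely this lower bound. The elementary devices for bounding Chebyshev norms from below (divided differences over suitably chosen nodes, Bernstein--Walsh at an exterior point, monotonicity under inclusion) are blind to the cusp and give only $W_{n}\ge1$ on a set of capacity $1$. Exhibiting explicit large polynomials does not help either, since only the extremal norm counts: for instance $(f^{(j)}(z)-z_{0})/(z-z_{0})$ is monic of degree $2^{j}-1$ but stays bounded on $J$, because the ``bad'' direction $\zeta\approx 1/j$ of its germ at $z_{0}$ points into the exterior, transverse to $J$; in fact $W_{2^{j}-1}$, $W_{2^{j}-2}$ and $W_{2^{j}+1}$ are all bounded. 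What is genuinely needed is a quantitative principle converting the superpolynomially fast decay of $g$ at the parabolic point (and at its dense backward orbit) into actual growth of the extremal quantity $\|T_{m}\|_{J}$, together with a way to control how this single exceptional point interacts with the global structure of $J$; that is where the real work lies.
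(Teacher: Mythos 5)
The statement you are addressing is stated in the paper as a \emph{conjecture}: the paper offers no proof, only the remarks that the smoothness-based upper bounds of \cite{andr,totikvar} do not apply because $J(f)$ is not a quasicircle (indeed has dimension $2$), and that Theorem \ref{kammo} alone does not determine enough Chebyshev polynomials to settle the question. Your text is consistent with that status, and parts of it are sound: since $f$ is monic, $\mathrm{Cap}(J(f))=1$ and $W_n=\|T_n\|_{J(f)}$; by Theorem \ref{kammo} and uniqueness, $T_{k2^{j}}=T_k\circ f^{(j)}$, and since $f^{(j)}(J(f))=J(f)$ this gives $W_{k2^{j}}=W_k$, so the conjecture is indeed equivalent to unboundedness of $W_m$ along odd $m$. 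But beyond this reduction you have a research program, not a proof, and you say so yourself. The genuine gap is exactly the step you flag as ``the crux'': neither the duality route (constructing annihilating measures with large top moment near the parabolic point $z_0=1/2$) nor the Widom/Faber route is carried out, and no quantitative principle is supplied that converts thinness of $J(f)$ at $z_0$ (and its backward orbit) into a divergent lower bound for $\|T_m\|_{J(f)}$. The asymptotics $g(z_0+t)\asymp 2^{-c/t}$ is plausible but only heuristically argued, and even granted it, there is no known implication of the form ``exponentially fast decay of the Green's function at one boundary point forces $\limsup_n W_n=\infty$'' that you could cite; indeed the whole difficulty of Pommerenke's problem is that no such mechanism is available. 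The side assertion that $W_{2^{j}-1}$, $W_{2^{j}-2}$ and $W_{2^{j}+1}$ are bounded is likewise unsubstantiated.

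In short: your reduction $W_{k2^{j}}=W_k$ is correct and consistent with what the paper knows, but the conjecture remains unproved by your proposal, just as it is left open in the paper; the missing ingredient is a concrete lower-bound construction (an explicit admissible measure in the duality, or a worked-out Widom-type asymptotic for this non-rectifiable, cusped boundary), and until one of those is executed the argument does not close.
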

As discussed in \cite{Bruck}, for $f(z)=z^2+1/4$, $J(f)$ has dimension $2$ and in this case $J(f)$ is not a quasicircle. Hence, Theorem 2 of \cite{andr} is not applicable since it requires even stronger assumptions on the outer boundary. But, unfortunately, proving Conjecture \ref{concon} is not an easy task. Theorem \ref{kammo} is not sufficient to find all Chebyshev polynomials on $J(f)$ explicitly.  Nonetheless, one can claim, further, that for $f_n(z)= z^2+1/4-\epsilon_n$, with a fastly decreasing positive sequence $(\epsilon_n)_{n=1}^\infty$ (for example if the terms satisfy $\sum_{n=1}^\infty \sqrt{\epsilon_n}<\infty$ with $0<1/4<\epsilon_n$), the sequence $(W_n(J_{(f_n)}))_{n=1}^\infty$ is also unbounded.

\end{document}